\newtheoremstyle{mio}%
{}{} 
{\itshape}{} 
{\bfseries}{.}{ } 
{#1 #2\thmnote{~\mdseries(#3)}} 
\theoremstyle{mio}
\newtheorem{teor}{Theorem}[section]
\newtheorem{cor}[teor]{Corollary}
\newtheorem{prop}[teor]{Proposition}
\newtheorem{lemma}[teor]{Lemma}
\newtheorem{defin}[teor]{Definition}
\newtheoremstyle{definition2}%
{}{} 
{}{} 
{\bfseries}{.}{ } 
{#1 #2\thmnote{\mdseries~ #3}} 
\theoremstyle{definition}
\title[Complete integral closure in Pr\"ufer domains]{The complete integral closure of a Pr\"ufer domain is a topological property}
\author{Dario Spirito}
\date{\today}
\address{Dipartimento di Scienze Matematiche, Informatiche e Fisiche, Universit\`a degli Studi di Udine, Udine, Italy}
\email{dario.spirito@uniud.it}
\subjclass[2010]{}
\keywords{Complete integral closure; Pr\"ufer domains; Zariski topology}
\newcommand{\marginparr}[1]{{\color{blue}{{\large$\bullet$}}}\marginpar{\footnotesize{\texttt{#1}}}}
\newcommand{\V}{\mathcal{V}}
\newcommand{\D}{\mathcal{D}}
\newcommand{\Int}{\mathrm{Int}}
\newcommand{\Min}{\mathrm{Min}}
\newcommand{\kn}{\mathrm{Kr}}
\newcommand{\qf}{\rho}
\begin{document}
\begin{abstract}
We show that the prime spectrum of the complete integral closure $D^\ast$ of a Pr\"ufer domain $D$ is completely determined by the Zariski topology on the spectrum $\Spec(D)$ of $D$.
\end{abstract}

\maketitle

\section{Introduction}
Let $D$ be an integral domain with quotient field $K$. An element $z\in K$ is \emph{almost integral} over $D$ if there is a nonzero element $c\in D$ such that $cz^n\in D$ for every positive integer $n$. The set $D^\ast$ of all the elements of $K$ that are almost integral over $D$ is a ring, called the \emph{complete integral closure} of $D$, and $D$ is said to be \emph{completely integrally closed} if $D=D^\ast$. However, the term ``closure'' is a misnomer, as $D^\ast$ need not to be completely integrally closed; the first example of this phenomenon was given by Gilmer and Heinzer \cite[Example 1]{gilmer_CIC_1966}, and it was shown later that this can happen also for Pr\"ufer domains \cite{heinzer-cic}, even in the finite-dimensional case \cite{sheldon-cic}. It is also possible that the process of taking repeatedly the complete integral closure never stabilizes \cite{hill_cic}.

The purpose of this paper is to show that, for a Pr\"ufer domain $D$, the complete integral closure $D^\ast$ (or rather, the spectrum of $D^\ast$) is uniquely determined by the spectrum $\Spec(D)$ of $D$ and by its Zariski topology. We do this by first reducing to the case of B\'ezout domains (through the use of the Kronecker function ring) and then by characterizing the condition $PD^\ast=D^\ast$ through the density of a particular enlargement of the closed cocompact sets containing $P$ (Theorem \ref{teor:Specast}). As a consequence, we prove that, for a Pr\"ufer domain, the condition of being completely integrally closed depends uniquely on $\Spec(D)$, and we prove a sufficient condition under which $D^\ast$ is completely integrally closed (Proposition \ref{prop:MinIcomp}). We also link the complete integral closure of $D$ with the sets of minimal primes of the finitely generated ideals in $D$ (Theorem \ref{teor:MinIMinJ}).

\bigskip

Throughout the paper, $D$ is an integral domain with quotient field $K$. A \emph{fractional ideal} of $D$ is a $D$-submodule $I$ of $K$ such that $dI\subseteq D$ for some $d\neq 0$. In particular, the fractional ideals of $D$ contained in $D$ are the ideals of $D$. For a fractional ideal $I$, we set $I^{-1}:=(D:I):=\{x\in K\mid xI\subseteq D\}$. A fractional ideal $I$ is \emph{invertible} if $IJ=D$ for some fractional ideal $J$; in this case, $J=I^{-1}$.

The \emph{spectrum} $\Spec(D)$ of $D$ is the set of its prime ideals; the Zariski topology is the topology whose closed sets are the sets in the form $\V(I):=\{P\in\Spec(D)\mid I\subseteq P\}$, where $I$ is an ideal of $D$. We say that a set $X\subseteq\Spec(D)$ is \emph{cocompact} in $\Spec(D)$ if $\Spec(D)\setminus X$ is compact; if $X$ is closed, then $X$ is cocompact if and only if $X=\D(I):=\Spec(D)\setminus\V(I)$ for some finitely generated ideal $I$.

If $I$ is a proper ideal of $D$, a \emph{minimal prime} of $I$ is a prime ideal that is minimal among the prime ideals containing $I$, i.e., a minimal element of $\V(I)$. We denote by $\Min(I)$ the set of minimal primes of $I$.

A \emph{valuation domain} is an integral domain $D$ such that, for every $x\in K$, at least one of $x$ and $x^{-1}$ belongs to $D$; a valuation domain is local, and the set of fractional ideals of a valuation domain is totally ordered. A \emph{Pr\"ufer domain} is an integral domain such that $D_P$ is a valuation domain for every $P\in\Spec(D)$. See \cite[Chapter IV]{gilmer} and \cite[Theorem 1.1.1]{fontana_libro} for many characterization of Pr\"ufer domains. A \emph{B\'ezout domain} is an integral domain such that every finitely generated ideal is principal; every B\'ezout domain is Pr\"ufer.

The \emph{Kronecker function ring} $\kn(D)$ of $D$ is the set of all rational functions $f(X)/g(X)$ in one indeterminate $X$ such that $c(f)V\subseteq c(g)V$ for all valuation overrings $V$ of $D$ (where $c(f)$ is the \emph{content} of $f$, i.e., the ideal generated by the coefficients of $D$). The ring $\kn(D)$ is a B\'ezout domain. If $D$ is a Pr\"ufer domain, the assignment $P\mapsto P\kn(D)$ is a homeomorphism from $\Spec(D)$ to $\Spec(\kn(D))$ \cite{fontana_krr-abRs}, and the rings between $\kn(D)$ and its quotient field $K(X)$ are in natural bijective correspondence with the rings between $D$ and $K$; such correspondence is given by $Z\mapsto Z\cap K$, and its inverse by $T\mapsto\kn(T)$. See \cite[\textsection 32]{gilmer} for general properties of the Kronecker function ring of an integral domain.

\section{Results}
We start by showing that the passage from a Pr\"ufer domain $D$ to its Kronecker function ring $\kn(D)$ respects the complete integral closure; this allows to consider only B\'ezout domains instead of general Pr\"ufer domains.
\begin{prop}\label{prop:kn}
Let $D$ be a Pr\"ufer domain. Then, $\kn(D)^\ast=\kn(D^\ast)$.
\end{prop}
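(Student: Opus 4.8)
The plan is to prove the two inclusions $\kn(D)^\ast \subseteq \kn(D^\ast)$ and $\kn(D^\ast) \subseteq \kn(D)^\ast$ separately, exploiting the correspondence $Z \mapsto Z \cap K$ between overrings of $\kn(D)$ and overrings of $D$ recalled in the introduction. The key structural fact I would lean on is that almost integrality is governed by the invertibility of ideals: $z$ is almost integral over a domain $R$ precisely when the fractional ideal $\sum_{n \geq 0} R z^n$ is contained in a finitely generated (equivalently, for Prüfer/Bézout domains, principal) fractional ideal, i.e. when there is a single $c \neq 0$ with $c z^n \in R$ for all $n$.

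**First I would** set up the correspondence carefully. Since $\kn(D)$ is a Bézout domain with quotient field $K(X)$, and since $Z \mapsto Z \cap K$ is a bijection between the rings $\kn(D) \subseteq Z \subseteq K(X)$ and the rings $D \subseteq T \subseteq K$ with inverse $T \mapsto \kn(T)$, I would try to show that each side of the claimed equality is the image of the other under these mutually inverse maps. Concretely, I expect $\kn(D^\ast) \cap K = D^\ast$ (this should follow because $\kn(T) \cap K = T$ for any overring $T$, applied to $T = D^\ast$), and dually I would want $\kn(D)^\ast \cap K = D^\ast$. The latter reduces the problem to showing that \emph{the complete integral closure commutes with intersecting against $K$}, i.e. that an element of $K$ is almost integral over $\kn(D)$ if and only if it is almost integral over $D$.

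**The main obstacle** I anticipate is the inclusion $\kn(D)^\ast \subseteq \kn(D^\ast)$, namely controlling elements of $\kn(D)^\ast$ that are genuine rational functions in $X$ rather than elements of $K$. Given $f/g \in \kn(D)^\ast$, there is a nonzero $h \in \kn(D)$ with $h (f/g)^n \in \kn(D)$ for all $n$; I must extract from this a single content-level witness showing $f/g \in \kn(D^\ast)$, which by definition of the Kronecker function ring means comparing contents $c(f), c(g)$ after passing to valuation overrings. The plan here is to translate the almost-integrality condition into a statement about the fractional ideals $c(f)^n$ relative to $c(g)^n$ and $c(h)$, using the fact that on a Bézout domain the content of a product behaves multiplicatively up to the relevant localizations (Dedekind--Mertens / content formula), and then to recognize the resulting boundedness condition as exactly almost integrality of the coefficients over $D$, hence membership in $\kn(D^\ast)$.

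**For the reverse inclusion** $\kn(D^\ast) \subseteq \kn(D)^\ast$, I expect the argument to be more direct: given $f/g$ whose contents satisfy the defining inequality over all valuation overrings of $D^\ast$, each almost-integral coefficient carries its own witness $c \in D$, and by taking a suitable product (or using that $D^\ast$ is itself determined by finitely generated data on the relevant finitely generated ideals) I can assemble a single nonzero $c \in D \subseteq \kn(D)$ witnessing that $f/g$ is almost integral over $\kn(D)$. Throughout, I would keep the Prüfer hypothesis in play through the valuation-overring description of $\kn$, since it is what makes the ideal-theoretic bookkeeping of contents collapse to the totally ordered, invertible-ideal setting where almost integrality is easiest to track.
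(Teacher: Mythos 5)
Your opening move coincides with the paper's: since $\kn(D)^\ast$ is a ring between $\kn(D)$ and $K(X)$, the correspondence $Z\mapsto Z\cap K$, $T\mapsto\kn(T)$ forces $\kn(D)^\ast=\kn\bigl(\kn(D)^\ast\cap K\bigr)$, so the whole proposition is equivalent to the single equality $\kn(D)^\ast\cap K=D^\ast$, which concerns only elements of $K$. The difficulty is that you then abandon this reduction: the ``main obstacle'' you propose to fight --- elements of $\kn(D)^\ast$ that are genuine rational functions in $X$ --- is exactly what the reduction has already disposed of. Once $\kn(D)^\ast=\kn(\kn(D)^\ast\cap K)$ is in hand, no rational function needs to be analyzed at all; what remains is the statement you formulate explicitly and then never prove: an element $z\in K$ is almost integral over $\kn(D)$ if and only if it is almost integral over $D$.

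That unproved equivalence is the genuine gap, and its nontrivial half needs an actual idea that is absent from the proposal. If $z\in K$ is almost integral over $\kn(D)$, the witness $c\in\kn(D)$ is a rational function, not an element of $D$, so it cannot serve as a witness over $D$. The paper's fix is one line: $c\kn(D)\cap D$ is a nonzero ideal of $D$, so one picks $0\neq c'\in c\kn(D)\cap D$; then $c'z^n\in cz^n\kn(D)\subseteq\kn(D)$ and $c'z^n\in K$, hence $c'z^n\in\kn(D)\cap K=D$, so $z\in D^\ast$. Nothing in your sketch performs this transfer of the witness into $D$. Your content-theoretic substitute could in principle be completed --- after moving the witness into $D$ by the same trick, multiplicativity of contents over a Pr\"ufer domain turns $c\,(f/g)^n\in\kn(D)$ into $c\,c(f)^n\subseteq c(g)^n$, i.e.\ into almost integrality over $D$ of every element of the invertible ideal $c(f)c(g)^{-1}$, and then $c(f)=c(f)c(g)^{-1}c(g)\subseteq D^\ast c(g)$ gives $c(f)V\subseteq c(g)V$ for every valuation overring $V$ of $D^\ast$ --- but as written it is a hope rather than an argument, and it misidentifies the target: it is not ``the coefficients'' of $f$ or of $f/g$ that become almost integral over $D$ (coefficients are not even invariants of the rational function; take $f=g$), but the elements of $c(f)c(g)^{-1}$. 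The same correction applies to your final paragraph: the assembled single witness must come from finitely many generators of $c(f)c(g)^{-1}$, not from coefficients.
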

\begin{proof}
It is enough to show that $\kn(D)^\ast\cap K=D^\ast$, where $K$ is the quotient field of $D$.

If $z\in D^\ast$, then $cz^n\in D$ for some $c\in D$, $c\neq 0$, for all positive integer $n$; hence, $cz^n\in\kn(D)$ and so $z\in\kn(D)^\ast$.

Conversely, if $z\in\kn(D)^\ast\cap K$, then there is a $c\in\kn(D)$, $c\neq 0$ such that $cz^n\in\kn(D)$ for all positive integer $n$. Since $c\kn(D)\cap D\neq(0)$, we can take $c'\in c\kn(D)\cap D$, $c'\neq 0$. Then $c'z^n\in cz^n\kn(D)\subseteq\kn(D)$; moreover, since $c',z\in K$ we have $c'z^n\in K$. Hence $c'z^n\in\kn(D)\cap K=D$. Thus $z\in D^\ast$.
\end{proof}

The next step is to show that in B\'ezout domains the almost integrality over $D$ of an element only depends on its denominator (when written as a quotient of elements of $D$). We shall use the following terminology.
\begin{defin}
Let $I$ be a finitely generated fractional ideal of the Pr\"ufer domain $D$. The \emph{positive part} of $I$ is
\begin{equation*}
p(I):=I\cap D,
\end{equation*}
while the \emph{negative part} is
\begin{equation*}
n(I):=(I+D)^{-1}=I^{-1}\cap D.
\end{equation*}
\end{defin}

\begin{lemma}\label{lemma:posneg}
Let $D$ be a Pr\"ufer domain and $I$ be a finitely generated fractional ideal of $D$. Then, the following hold.
\begin{enumerate}[(a)]
\item\label{lemma:posneg:id} $p(I)$ and $n(I)$ are invertible ideals contained in $D$.
\item\label{lemma:posneg:decomp} $I=p(I)n(I)^{-1}$.
\item\label{lemma:posneg:coprime} $p(I)+n(I)=D$.
\end{enumerate}
\end{lemma}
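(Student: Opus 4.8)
The plan is to reduce the three statements to a single local identity and then deduce everything by formal manipulation in the group of invertible fractional ideals, using throughout the standard fact that in a Pr\"ufer domain every nonzero finitely generated fractional ideal is invertible (see \cite[Chapter IV]{gilmer}).

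I would first dispose of $n(I)$. The fractional ideal $I+D$ is a sum of two finitely generated fractional ideals, hence finitely generated and therefore invertible, so $n(I)=(I+D)^{-1}$ is invertible; and since $I+D\supseteq D$, inverting gives $n(I)\subseteq D^{-1}=D$. Thus $n(I)$ is an invertible ideal contained in $D$.

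The heart of the argument is the identity $p(I)=I\cap D=I(I+D)^{-1}=I\,n(I)$. I would obtain it from the equivalent form $(I\cap D)(I+D)=ID=I$, proved by localizing at each $P\in\Spec(D)$: since $D_P$ is a valuation domain, the ideals $I_P$ and $D_P$ are comparable, so $(I\cap D)_P$ and $(I+D)_P$ are respectively the smaller and the larger of $I_P$ and $D_P$, and their product is $I_P D_P=I_P$. As equality of fractional ideals can be checked at every prime, the global identity follows; multiplying by the invertible ideal $(I+D)^{-1}$ then gives $p(I)=I(I+D)^{-1}$. Being a product of invertibles, $p(I)$ is invertible, and it is contained in $D$ by definition, which completes (a). This identity also shows in passing that $p(I)$ is finitely generated, a fact not obvious from its definition as an intersection.

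The remaining parts are then immediate. For (b), multiplying $p(I)=I\,n(I)$ on the right by $n(I)^{-1}$ yields $p(I)n(I)^{-1}=I$. For (c), distributing the identity $(I+D)(I+D)^{-1}=D$ gives $I(I+D)^{-1}+D(I+D)^{-1}=D$, and the two summands are exactly $p(I)$ and $n(I)$, whence $p(I)+n(I)=D$. The only step with genuine content is the local verification of $(I\cap D)(I+D)=I$; the rest is bookkeeping inside the invertible-ideal group, and I expect the main (minor) care to be in justifying that intersection, sum, and inverse all commute with localization so that the local computation transfers globally.
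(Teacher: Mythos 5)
Your proof is correct, and it takes a genuinely different route from the paper's. The paper handles the three parts separately: for (a) it invokes the Pr\"ufer-domain fact that the intersection of two finitely generated ideals is finitely generated (applied to $I\cap D$ and $I^{-1}\cap D$), so that $p(I)$ and $n(I)$ are invertible because they are finitely generated; for (b) it simply cites the decomposition $I=(I\cap D)(I+D)$ from \cite[Theorem 25.2(d)]{gilmer}; and for (c) it gives a separate localization argument, showing that if a maximal ideal $M$ contains $p(I)$ then $ID_M\subsetneq D_M$, hence $I^{-1}D_M\supsetneq D_M$ and $n(I)D_M=D_M$, so no maximal ideal contains both $p(I)$ and $n(I)$. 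You instead prove the cited identity $(I\cap D)(I+D)=I$ directly --- your local verification is sound, since localization commutes with finite sums, finite intersections, and products of fractional ideals, and the fractional ideals of the valuation domain $D_P$ are totally ordered --- and then derive everything else formally in the group of invertible fractional ideals: $p(I)=I\,n(I)$ is invertible as a product of invertibles (so finite generation of $p(I)$ becomes a corollary rather than an input), (b) follows by cancellation, and (c) follows by distributing $(I+D)(I+D)^{-1}=D$. Your approach buys self-containedness and unification: one identity drives all three statements, and you never need the finitely-generated-intersection property of Pr\"ufer domains. The paper's approach is shorter given the citation, and its proof of (c) exhibits the local coprimality explicitly, which is arguably the more geometric picture. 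One pedantic point, common to both proofs: $I$ must be nonzero for any invertibility statement to make sense, which the lemma tacitly assumes.
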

\begin{proof}
\ref{lemma:posneg:id} It is clear that $p(I),n(I)\subseteq D$. Moreover, $p(I)$ and $n(I)$ are finitely generated since, in a Pr\"ufer domain, the intersection of two finitely generated ideals is finitely generated; hence they are also invertible.

\ref{lemma:posneg:decomp} follows from the decomposition $I=(I\cap D)(I+D)$ \cite[Theorem 25.2(d)]{gilmer} and the fact that $p(I),n(I)$ are invertible ideals.

\ref{lemma:posneg:coprime} Let $M$ be a maximal ideal containing $p(I)$. Then, $I\cap D\subseteq M$, and thus $(I\cap D)D_M=ID_M\cap D_M\subsetneq D_M$, i.e., $ID_M\subsetneq D_M$ since $D_M$ is a valuation domain. Hence, $I^{-1}D_M=(ID_M)^{-1}\supsetneq D_M$ and so $n(I)D_M=I^{-1}D_M\cap D_M=D_M$, i.e., $n(I)\nsubseteq M$. Therefore, no maximal ideal can contain both $p(I)$ and $n(I)$, and so $p(I)$ and $n(I)$ are coprime.
\end{proof}

\begin{prop}\label{prop:negative}
Let $D$ be a B\'ezout domain with quotient field $K$, and let $z\in K$. Let $n(I)=yD$. Then, $z\in D^\ast$ if and only if $y^{-1}\in D^\ast$.
\end{prop}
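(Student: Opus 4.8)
The plan is to use Lemma \ref{lemma:posneg} to reduce the statement to an elementary divisibility fact about coprime elements in a B\'ezout domain. Taking $I:=zD$, which is a principal (hence finitely generated) fractional ideal, parts \ref{lemma:posneg:id} and \ref{lemma:posneg:decomp} give $p(I)=xD$ and $n(I)=yD$ for suitable $x,y\in D$ (invertible ideals are principal in a B\'ezout domain) together with $zD=p(I)n(I)^{-1}=xy^{-1}D$. Thus $z$ and $xy^{-1}$ differ by a unit of $D$, and since multiplication by a unit does not affect almost integrality, I may assume outright that $z=x/y$. Part \ref{lemma:posneg:coprime} then tells me that $x$ and $y$ are coprime, i.e.\ $xD+yD=D$.

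Before the two implications I would record two facts valid in any B\'ezout domain. First, coprimality is preserved under taking powers, so that $x^nD+y^nD=D$ for every $n$; this follows by writing $1=xa+yb$, expanding $1=(xa+yb)^{2n-1}$, and observing that in each binomial term either the exponent of $xa$ is at least $n$ (so the term is divisible by $x^n$) or that of $yb$ is at least $n$ (so it is divisible by $y^n$). Second, I would use the cancellation rule that if $u,v$ are coprime and $v\mid uw$ then $v\mid w$, which is immediate from a B\'ezout relation $u\alpha+v\beta=1$.

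With these in hand, both directions are short. If $y^{-1}\in D^\ast$, with witness $c'\neq 0$ so that $c'y^{-n}\in D$ for all $n$, then $c'z^n=x^n(c'y^{-n})$ is a product of elements of $D$, hence lies in $D$, and the same $c'$ witnesses $z\in D^\ast$. Conversely, if $c\neq 0$ witnesses $z\in D^\ast$, then $cx^ny^{-n}\in D$, i.e.\ $y^n\mid cx^n$, for every $n$; since $x^n$ and $y^n$ are coprime, the cancellation rule yields $y^n\mid c$, that is $cy^{-n}\in D$, for all $n$, so the same $c$ witnesses $y^{-1}\in D^\ast$.

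I do not expect a serious obstacle here: once the representation $z=x/y$ with $x,y$ coprime is set up, the argument is purely arithmetic. The only steps genuinely using the hypothesis (rather than holding over an arbitrary domain) are the two B\'ezout facts above, especially the stability of coprimality under powers, and the identification via Lemma \ref{lemma:posneg} of the denominator $y$ as a generator of $n(zD)$, which is precisely what makes the condition intrinsic and independent of the chosen representation of $z$.
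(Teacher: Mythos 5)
Your proof is correct, and its skeleton coincides with the paper's: take $I=zD$, use Lemma \ref{lemma:posneg} to write $z=x/y$ with $xD=p(I)$, $yD=n(I)$ and $xD+yD=D$, and note that the forward implication is immediate from $cz^n=x^n(cy^{-n})$. Where you genuinely differ is the converse. The paper proves $y^n\mid cx^n\Rightarrow y^n\mid c$ by a local-global argument: it checks $c\in y^nD_M$ at every maximal ideal $M$, splitting into the cases $x\notin M$ (where $cD_M=cx^nD_M\subseteq y^nD_M$) and $x\in M$ (where coprimality forces $y\notin M$, so $y^nD_M=D_M$), then intersects over all $M$. You instead stay global and purely arithmetic: coprimality passes to powers via the expansion of $(xa+yb)^{2n-1}$, and then a Euclid-style cancellation lemma (if $u,v$ are comaximal and $v\mid uw$ then $v\mid w$) finishes the job. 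Both routes consume exactly the same input, namely the coprimality from Lemma \ref{lemma:posneg}\ref{lemma:posneg:coprime}; your two auxiliary facts hold in an arbitrary commutative ring, so your argument is slightly more elementary and makes it transparent that the only genuinely Pr\"ufer/B\'ezout content of the proposition is Lemma \ref{lemma:posneg} itself, whereas the paper's localization step is the standard move in this area and is the pattern that recurs in the rest of the paper (e.g.\ in Theorem \ref{teor:Specast}). One cosmetic point: the statement never defines $I$; like the paper, you correctly read it as $I=zD$, and your remark that the criterion is therefore intrinsic in $z$ (independent of the chosen fraction representation) is a nice observation the paper leaves implicit.
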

\begin{proof}
Let $x:=zy^{-1}$; then, $z=x/y$ and $x$ generates $p(I)$, so that, in particular, $x\in D$. If $y^{-1}\in D^\ast$, then there is a $c\in D$, $c\neq 0$ such that $cy^{-n}\in D$ for all positive integer $n$; hence, $cz^n=cx^n/y^n\in x^nD\subseteq D$ and $z\in D^\ast$. Suppose that $z\in D^\ast$, and let $c\in D$, $c\neq 0$ be such that $cz^n\in D$ for all positive integer $n$. Then, $cx^n\in y^nD$, and so $cx^nD_M\in y^nD_M$ for all maximal ideals $M$ of $D$. We claim that $cy^{-n}\in D$, and it is enough to show that $c\in y^nD_M$ for every $M$. If $x\notin M$, then $cD_M=cx^nD_M\in y^nD_M$. If $x\in M$, then $y\notin M$ since $xD+yD=p(I)+(I)=D$ by Lemma \ref{lemma:posneg}\ref{lemma:posneg:coprime}; thus, $y^nD=D$ and $c\in y^nD_M=D_M$ as well. Hence $y^{-1}\in D^\ast$, as claimed.
\end{proof}

We introduce the topological construction on which our criterion is based.
\begin{defin}
Let $X\subseteq\Spec(D)$. We say that $P\in\Spec(D)$ is a \emph{rim element} for $X$ if $P\notin X$ and there is a $Q\in X$ such that $P\subseteq Q$ and there is no prime ideal properly contained between $P$ and $Q$.

The \emph{rim closure} $\qf(X)$ of $X$ is the union of $X$ and the rim elements of $X$.
\end{defin}

The rim closure is a topological construction, as it only depends uniquely from the Zariski topology.
\begin{lemma}\label{lemma:qf-omef}
Let $D,D'$ be Pr\"ufer domains with a homeomorphism $\phi:\Spec(D)\longrightarrow\Spec(D')$. Then, for every $X\subseteq\Spec(D)$, we have $\phi(\qf(X))=\qf(\phi(X))$.
\end{lemma}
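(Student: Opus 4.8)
The plan is to show that the rim closure $\qf$ is defined purely in terms of the partial order $\subseteq$ on prime ideals, and that this partial order is itself recoverable from the Zariski topology. Once both facts are established, a homeomorphism $\phi$, which by definition preserves the Zariski topology, must preserve the order, and hence must commute with $\qf$.

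First I would unwind the definition of a rim element. The statement ``$P\notin X$ and there exists $Q\in X$ with $P\subseteq Q$ such that no prime is properly contained strictly between $P$ and $Q$'' is phrased entirely using membership in $X$ and the inclusion relation on $\Spec(D)$. The key observation I want to isolate is that, for two prime ideals $P,P'$ of \emph{any} commutative ring, one has $P\subseteq P'$ if and only if $P'$ lies in the Zariski closure $\overline{\{P\}}$ of the singleton $\{P\}$; indeed $\overline{\{P\}}=\V(P)=\{P'\mid P\subseteq P'\}$. Thus the specialization order of the Zariski topology coincides with set-theoretic inclusion of primes. Since a homeomorphism preserves topological closure, it preserves the specialization order: $P\subseteq P'$ in $\Spec(D)$ if and only if $\phi(P)\subseteq\phi(P')$ in $\Spec(D')$. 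This is the conceptual heart of the argument.

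Next I would verify that the ``no prime strictly between'' condition, i.e.\ the covering relation of the order, is also preserved. This is immediate once the order is preserved: $\phi$ is an order-isomorphism (being an order-preserving bijection whose inverse $\phi^{-1}$ is also order-preserving, as $\phi^{-1}$ is likewise a homeomorphism), so $P$ is covered by $Q$ if and only if $\phi(P)$ is covered by $\phi(Q)$. Combining this with the preservation of membership (for $Y\subseteq\Spec(D)$ one has $P\in Y\iff\phi(P)\in\phi(Y)$, since $\phi$ is a bijection), I conclude that $P$ is a rim element for $X$ if and only if $\phi(P)$ is a rim element for $\phi(X)$. Therefore $\phi$ maps the set of rim elements of $X$ bijectively onto the set of rim elements of $\phi(X)$, and since it also maps $X$ onto $\phi(X)$, it maps the union $\qf(X)=X\cup\{\text{rim elements of }X\}$ onto $\qf(\phi(X))$, giving $\phi(\qf(X))=\qf(\phi(X))$.

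I do not anticipate a serious obstacle here, as the result is essentially a formal consequence of the fact that inclusion of primes is the specialization order of the Zariski topology; the only point requiring a line of justification is that identification, $\V(P)=\overline{\{P\}}$, which follows because $\overline{\{P\}}$ is the smallest closed set containing $P$ and the closed sets are exactly the $\V(I)$. One minor subtlety worth a sentence is that everything should be phrased as a chain of biconditionals so that both inclusions $\phi(\qf(X))\subseteq\qf(\phi(X))$ and $\qf(\phi(X))\subseteq\phi(\qf(X))$ are obtained at once; using $\phi^{-1}$ (again a homeomorphism) to get the reverse containment is the clean way to package this.
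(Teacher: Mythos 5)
Your proposal is correct and follows essentially the same route as the paper's proof: both rest on the observation that inclusion of primes is the specialization order of the Zariski topology (i.e., $P\subseteq Q$ iff $Q\in\overline{\{P\}}$), so that $\phi$ and $\phi^{-1}$ preserve inclusions, hence the covering relation, hence rim elements. The only cosmetic difference is packaging: the paper proves one containment and applies it to $\phi^{-1}$, while you phrase everything as biconditionals after noting $\phi$ is an order isomorphism; these are the same argument.
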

\begin{proof}
We show that $\phi(\qf(X))\subseteq\qf(\phi(X))$. Clearly $\phi(X)\subseteq\qf(\phi(X))$. Let $P$ be a rim element of $x$; then, there is a prime ideal $Q\in X$ such that $P\subseteq Q$ and there are no prime ideals between $P$ and $Q$. Since $P\subseteq Q$ is equivalent to the fact that $Q$ is in the closure of $\{P\}$, it follows that $\phi(P)\subsetneq\phi(Q)$ (as $\phi$ is injective); moreover, if $\phi(P)\subsetneq L\subsetneq \phi(Q)$ for some $L\in\Spec(D')$, then by the same reasoning $P\subsetneq \phi^{-1}(L)\subsetneq Q$, against the fact that $P\in\qf(X)$. Thus $\phi(P)$ must be a rim element of $\phi(X)$, and so $\phi(\qf(X))\subseteq\qf(\phi(X))$.

Applying the previous inclusion to the homeomorphism $\phi^{-1}:\Spec(D')\longrightarrow\Spec(D)$, we have $\phi^{-1}(\qf(\phi(X)))\subseteq\qf(\phi^{-1}\circ\phi(X))=\qf(X)$, i.e., $\qf(\phi(X))\subseteq\phi(\qf(X))$. Thus $\phi(\qf(X))=\qf(\phi(X))$, as claimed.
\end{proof}

\begin{defin}
Let $D$ be a Pr\"ufer domain. We denote by $\Spec^\ast(D)$ the set of all $P\in\Spec(D)$ such that $PD^\ast\neq D^\ast$.
\end{defin}

The following result is the main theorem of the paper: it establishes a topological characterization of the prime ideals in $\Spec^\ast(D)$.
\begin{teor}\label{teor:Specast}
Let $D$ be a Pr\"ufer domain and let $P\in\Spec(D)$. The following are equivalent:
\begin{enumerate}[(i)]
\item\label{teor:Specast:Pin} $P\in\Spec^\ast(D)$;
\item\label{teor:Specast:closed} for every closed cocompact set $X$ of $\Spec(D)$ containing $P$, the rim closure $\qf(X)$ is dense in $\Spec(D)$.
\end{enumerate}
\end{teor}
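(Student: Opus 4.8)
The plan is to reduce to the B\'ezout case via the Kronecker function ring and then to prove, for a B\'ezout domain, a pointwise equivalence between the almost-integrality condition on a denominator $y$ and the density of $\qf(\V(y))$.

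First I would record the reduction. Since $D^\ast$ is an overring of the Pr\"ufer domain $D$, it is again Pr\"ufer, so $\kn(D^\ast)$ is defined and, by Proposition \ref{prop:kn}, $\kn(D)^\ast=\kn(D^\ast)$. Using the homeomorphism $\phi\colon\Spec(D)\to\Spec(\kn(D))$, $P\mapsto P\kn(D)$, together with this identity and the standard fact that $Q\mapsto Q\kn(T)$ is a homeomorphism $\Spec(T)\to\Spec(\kn(T))$ under which an extended ideal is proper exactly when the original is, I would check that $P\in\Spec^\ast(D)$ if and only if $\phi(P)\in\Spec^\ast(\kn(D))$. On the topological side, $\phi$ carries closed cocompact sets containing $P$ to closed cocompact sets containing $\phi(P)$, and by Lemma \ref{lemma:qf-omef} it intertwines the rim closure and preserves density; hence condition \ref{teor:Specast:closed} holds for $(D,P)$ iff it holds for $(\kn(D),\phi(P))$. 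As $\kn(D)$ is B\'ezout, it suffices to prove the theorem for B\'ezout domains.

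Next, the B\'ezout case. Set $S:=\{y\in D\mid\bigcap_{n\ge1}y^nD\neq(0)\}$. An elementary computation shows $y^{-1}\in D^\ast$ iff $\bigcap_n y^nD\neq(0)$, so $S$ is exactly the set of admissible denominators; combined with Proposition \ref{prop:negative} and the decomposition $I=p(I)n(I)^{-1}$ of Lemma \ref{lemma:posneg}, this gives $D^\ast=S^{-1}D$, with $S$ multiplicatively closed. Consequently $PD^\ast=PS^{-1}D$ equals $D^\ast$ precisely when $P\cap S\neq\emptyset$, so condition \ref{teor:Specast:Pin} becomes: $\bigcap_n y^nD=(0)$ for every $y\in P$. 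Since $D$ is B\'ezout, the closed cocompact sets containing $P$ are exactly the $\V(y)$ with $y\in P$, so condition \ref{teor:Specast:closed} reads: $\qf(\V(y))$ is dense for every $y\in P$. Thus the theorem reduces to the pointwise claim, for $y\in D\setminus\{0\}$ (the case $y=0$ being trivial on both sides):
\[\bigcap_n y^nD=(0)\iff\qf(\V(y))\text{ is dense in }\Spec(D).\]

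The heart of the proof is this pointwise claim, which I would establish by localizing and analysing value groups. For each prime $Q\in\V(y)$ the valuation ring $D_Q$ exhibits a ``jump'' between the minimal prime over $y$ and the prime just below it, governed by the convex subgroup $H_y$ generated by the value of $y$; the lower prime contracts to a rim element of $\V(y)$, and it equals $(0)$ exactly when $y$ is archimedean there, i.e. when $\bigcap_n y^nD_Q=(0)$. For ``$\bigcap_n y^nD\neq(0)\Rightarrow$ not dense'' I would take $0\neq w\in\bigcap_n y^nD$ and show $w$ lies in every $Q\in\V(y)$ and, using that a rim element sits at a jump where the value of $y$ is archimedean in the minimal nonzero convex subgroup, that $w$ lies in every rim element; hence $w\in\bigcap_{Q\in\qf(\V(y))}Q\neq(0)$. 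For the converse, if $\qf(\V(y))$ is not dense I would pick $0\neq w$ in $\bigcap_{Q\in\qf(\V(y))}Q$; the jump rim element attached to each maximal $M\ni y$ forces the value of $w$ to exceed every multiple of the value of $y$ at $M$ (ruling out $(0)$ as a rim element, so $y$ is non-archimedean at $M$), while for $M\not\ni y$ this is automatic, so $w\in y^nD_M$ for all $M$ and $n$, whence $w\in\bigcap_n y^nD\neq(0)$. The main obstacle is exactly this step: matching the order-theoretic rim elements with the valuation-theoretic jumps, and extracting a single global $w$ that works simultaneously at every maximal ideal rather than one witness per valuation.
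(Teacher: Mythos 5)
Your proposal is correct and follows essentially the same route as the paper: reduction to the B\'ezout case via the Kronecker function ring, followed by the key localization identity that at each relevant maximal ideal $M$ the rim element of $\V(yD)$ below the minimal prime of $y$ is exactly the contraction of $\bigcap_n y^nD_M$, which is precisely the computation carried out in both directions of the paper's proof. Your repackaging --- the pointwise claim $\bigcap_n y^nD=(0)\iff\qf(\V(yD))$ dense, and the identity $D^\ast=S^{-1}D$ for $S=\{y\in D\mid\bigcap_n y^nD\neq(0)\}$ --- is a clean but equivalent reorganization of what the paper does implicitly via Proposition \ref{prop:negative} and the B\'ezout-domain step of replacing $P$ by a single $x\in P$ with $xD^\ast=D^\ast$.
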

\begin{proof}
We first consider the case in which $D$ is a B\'ezout domain.

Suppose that $P\in\Spec^\ast(D)$, and let $X$ be a closed cocompact set of $\Spec(D)$ containing $P$; then, $X=\V(xD)$ for some $x\in D$. Suppose that $\qf(X)$ is not dense in $\Spec(D)$; then, there is a closed subset $Y=\V(J)$ containing $\qf(X)$ that is not equal to the whole $\Spec(D)$. In particular, there is a nonzero $c\in J$. We claim that $cx^{-n}\in D$ for every $n$.

Indeed, let $M$ be a maximal ideal of $D$. If $M\notin X$, then $xD_M=D_M$ and so $cx^{-n}D_M=cD_M\subseteq D_M$. Suppose that $M\in X$: since $X$ is cocompact, there is a rim element $Q$ of $X$ that is contained in $M$. Then, $QD_M\subsetneq xD_M$ since $x\notin Q$, and thus $QD_M\subsetneq x^nD_M$ for every $n$. In particular, $c\in x^nD_M$ and so $cx^{-n}\in D_M$. Thus $\displaystyle{cx^{-n}\in\bigcap_{M\in\Max(D)}D_M=D}$ and $x^{-1}$ is almost integral over $D$, i.e., $x^{-1}\in D^\ast$. However, $x\in P$, and thus $PD^\ast=D^\ast$, against the hypothesis $P\in\Spec^\ast(D)$. Therefore, \ref{teor:Specast:closed} holds.

Conversely, suppose that $P\notin\Spec^\ast(D)$; then, $PD^\ast=D^\ast$, and thus (since $D$ is a B\'ezout domain) there is an $x\in P$ such that $xD^\ast=D^\ast$. Hence, we have $x^{-1}\in D^\ast$, and so there is a $c\neq 0$ such that $cx^{-n}\in D$ for all positive integer $n$; we can suppose without loss of generality that $c\in xD$. We claim that $\qf(\V(xD))\subseteq\V(cD)$. Indeed, by construction $\V(xD)\subseteq\V(cD)$. Let $Q$ be a rim element for $\V(xD)$: then, there is a prime ideal $Q'\in\V(xD)$ such that there is no prime ideal between $Q$ and $Q'$. Hence, if $M$ is a maximal ideal of $D$ in $\V(xD)$ we have $\bigcap_nx^nD_M=QD_M$. Since $c\in x^nD$ for every $n$, we have $c\in QD_M$; therefore, $c\in Q$, i.e., $Q\in\V(cD)$ and $\V(cD)$ contains the whole rim closure $\qf(\V(xD))$. Thus, $\qf(\V(xD))$ is not dense.

\medskip

Suppose now that $D$ is a Pr\"ufer (not necessarily B\'ezout) domain. By Proposition \ref{prop:kn}, $P\in\Spec^\ast(D)$ if and only if $P\kn(D)\in\Spec^\ast(\kn(D))$. By the previous part of the proof, since $\kn(D)$ is a B\'ezout domain, $P\kn(D)\in\Spec^\ast(D)$ if and only if condition \ref{teor:Specast:closed} holds in $\kn(D)$. However, by Lemma \ref{lemma:qf-omef} condition \ref{teor:Specast:closed} is invariant by homeomorphisms; since the assignment $P\mapsto P\kn(D)$ is a homeomorphism, the equivalence \ref{teor:Specast:Pin} $\iff$ \ref{teor:Specast:closed} holds also in $D$.
\end{proof}

\begin{cor}
Let $D$ be a Pr\"ufer domain. Then, $D$ is completely integrally closed if and only if the rim closure $\qf(X)$ of every closed cocompact subset $X$ of $\Spec(D)$ is dense in $\Spec(D)$.
\end{cor}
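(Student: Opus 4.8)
The plan is to derive the corollary from Theorem~\ref{teor:Specast} by quantifying over all primes, once the ring-theoretic reformulation
\[
D\text{ is completely integrally closed}\iff\Spec^\ast(D)=\Spec(D)
\]
is in hand. The forward implication is immediate: if $D=D^\ast$, then for every $P\in\Spec(D)$ we have $PD^\ast=PD=P\neq D=D^\ast$, so $P\in\Spec^\ast(D)$, whence $\Spec^\ast(D)=\Spec(D)$.

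The reverse implication is where I expect the real work to be, and I would handle it through the localizations of the overring $D^\ast$. Assume $\Spec^\ast(D)=\Spec(D)$, so that $MD^\ast\neq D^\ast$ for every maximal ideal $M$ of $D$. Fixing such an $M$, the proper ideal $MD^\ast$ lies in some maximal ideal $Q$ of $D^\ast$, and $Q\cap D\supseteq M$ forces $Q\cap D=M$. The key input is the standard structure of overrings of a Pr\"ufer domain: since $D$ is Pr\"ufer and $D^\ast$ is an overring, one has $D^\ast_Q=D_{Q\cap D}=D_M$. (Concretely, $D_M\subseteq D^\ast_Q$ because $D\setminus M\subseteq D^\ast\setminus Q$, and $D^\ast_Q$ is then a valuation overring of $D_M$ whose maximal ideal lies over $M$, forcing equality.) Hence $D^\ast\subseteq D^\ast_Q=D_M$ for every $M$, and therefore $D^\ast\subseteq\bigcap_{M\in\Max(D)}D_M=D$, giving $D=D^\ast$. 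This localization identity is the one ingredient beyond Theorem~\ref{teor:Specast}, and a direct citation for $D^\ast_Q=D_{Q\cap D}$ in the Pr\"ufer setting can replace the sketch above.

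It then remains to translate the equivalence into the stated topological condition. By Theorem~\ref{teor:Specast}, $\Spec^\ast(D)=\Spec(D)$ holds exactly when every $P\in\Spec(D)$ satisfies \ref{teor:Specast:closed}, i.e.\ $\qf(X)$ is dense for every closed cocompact $X$ containing $P$. Since every nonempty closed cocompact set contains a prime, the quantifier over pairs $(P,X)$ collapses to the requirement that $\qf(X)$ be dense for every nonempty closed cocompact $X$; the empty set is the only exclusion, being the degenerate case $\qf(\emptyset)=\emptyset$. Chaining the two equivalences yields the corollary, the content of this final paragraph being purely a matter of matching quantifiers.
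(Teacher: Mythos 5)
Your proof is correct and takes essentially the same route as the paper: both reduce the corollary to the equivalence ``$D$ completely integrally closed $\iff \Spec^\ast(D)=\Spec(D)$'' and then apply Theorem~\ref{teor:Specast} prime by prime. The only difference is that the paper treats the implication $\Spec^\ast(D)=\Spec(D)\Rightarrow D=D^\ast$ as immediate, whereas you (correctly) justify it via the standard fact that for an overring $T$ of a Pr\"ufer domain one has $T_Q=D_{Q\cap D}$; your remark that the empty set must be excluded from the quantification (since $\qf(\emptyset)=\emptyset$ is never dense) is a fair point that the paper's statement silently glosses over.
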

\begin{proof}
If the condition hold, then every prime ideal $P$ of $D$ belongs to $\Spec^\ast(D)$ by Theorem \ref{teor:Specast}, and so $D=D^\ast$. Conversely, if $D=D^\ast$, then $\Spec^\ast(D)=\Spec(D)$, i.e., every prime ideal $P$ belongs to $\Spec^\ast(D)$. The claim follows again from Theorem \ref{teor:Specast}.
\end{proof}

\begin{teor}\label{teor:phiSpecast}
Let $D,D'$ be Pr\"ufer domains such that there is a homeomorphism $\phi:\Spec(D)\longrightarrow\Spec(D')$. Then, $\phi(\Spec^\ast(D))=\Spec^\ast(D')$.
\end{teor}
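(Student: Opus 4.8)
The plan is to deduce this statement directly from the topological characterization in Theorem~\ref{teor:Specast} together with the homeomorphism-invariance of the rim closure established in Lemma~\ref{lemma:qf-omef}. The crucial point is that condition \ref{teor:Specast:closed} of Theorem~\ref{teor:Specast} is formulated purely in terms of the Zariski topology, so it ought to transfer across any homeomorphism of spectra; the theorem then translates this topological statement back into membership in $\Spec^\ast$.

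First I would fix $P\in\Spec^\ast(D)$ and aim to show $\phi(P)\in\Spec^\ast(D')$. To verify condition \ref{teor:Specast:closed} for $\phi(P)$, let $Y$ be an arbitrary closed cocompact subset of $\Spec(D')$ containing $\phi(P)$, and set $X:=\phi^{-1}(Y)$. I would then check that $X$ is again a closed cocompact subset of $\Spec(D)$ containing $P$: closedness and the containment $P\in X$ are immediate because $\phi$ is a homeomorphism and a bijection, while cocompactness holds because $\phi$ restricts to a homeomorphism between the complements $\Spec(D)\setminus X$ and $\Spec(D')\setminus Y$, and the latter is compact by hypothesis. Theorem~\ref{teor:Specast} applied to $P$ then gives that $\qf(X)$ is dense in $\Spec(D)$.

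Next I would transport this density across $\phi$. Since a homeomorphism sends dense subsets to dense subsets, $\phi(\qf(X))$ is dense in $\Spec(D')$; and by Lemma~\ref{lemma:qf-omef} we have $\phi(\qf(X))=\qf(\phi(X))=\qf(Y)$, so $\qf(Y)$ is dense. As $Y$ was arbitrary, condition \ref{teor:Specast:closed} holds for $\phi(P)$, and Theorem~\ref{teor:Specast} applied in $D'$ yields $\phi(P)\in\Spec^\ast(D')$. This establishes $\phi(\Spec^\ast(D))\subseteq\Spec^\ast(D')$, and running the identical argument with the homeomorphism $\phi^{-1}$ gives the reverse inclusion, hence equality.

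Since both ingredients---the characterization of $\Spec^\ast$ and the invariance of $\qf$---are already available, I do not expect a serious obstacle. The only step demanding a little care is confirming that cocompactness, being a property of the complement rather than of the set itself, is preserved by $\phi$; this is precisely where the full homeomorphism hypothesis is needed, and not merely that $\phi$ is a continuous bijection.
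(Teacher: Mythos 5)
Your proposal is correct and follows essentially the same route as the paper: the paper's proof also combines Theorem~\ref{teor:Specast} with Lemma~\ref{lemma:qf-omef} to note that condition \ref{teor:Specast:closed} is invariant under homeomorphisms, and your write-up simply makes explicit the transfer of closedness, cocompactness, and density that the paper leaves implicit.
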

\begin{proof}
By Lemma \ref{lemma:qf-omef}, condition \ref{teor:Specast:closed} of Theorem \ref{teor:Specast} is invariant by homeomorphisms. Therefore, $\phi$ sends primes of $\Spec^\ast(D)$ inside $\Spec^\ast(D')$, and primes outside $\Spec^\ast(D)$ outside $\Spec^\ast(D')$. The claim follows.
\end{proof}

\begin{cor}
Let $D,D'$ be Pr\"ufer domain with the homeomorphic spectra. Then, $D$ is completely integrally closed if and only if $D'$ is completely integrally closed.
\end{cor}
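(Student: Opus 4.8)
The plan is to deduce the statement immediately from the invariance of $\Spec^\ast$ under homeomorphisms. First I would record the elementary characterization that a Pr\"ufer domain $E$ is completely integrally closed if and only if $\Spec^\ast(E)=\Spec(E)$. One direction is trivial: if $E=E^\ast$, then for every (necessarily proper) prime $P$ one has $PE^\ast=P\subsetneq E=E^\ast$, so $P\in\Spec^\ast(E)$ and hence $\Spec^\ast(E)=\Spec(E)$. For the converse I would appeal to the implication already used in the corollary to Theorem \ref{teor:Specast}: if every prime lies in $\Spec^\ast(E)$, then by Theorem \ref{teor:Specast} the rim-closure density condition holds for every closed cocompact set, and therefore $E=E^\ast$.

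Granting this characterization, I would take the given homeomorphism $\phi:\Spec(D)\longrightarrow\Spec(D')$ and apply Theorem \ref{teor:phiSpecast} to get $\phi(\Spec^\ast(D))=\Spec^\ast(D')$. Since $\phi$ is a bijection, $\Spec^\ast(D)=\Spec(D)$ holds if and only if $\phi(\Spec^\ast(D))=\phi(\Spec(D))=\Spec(D')$, that is, if and only if $\Spec^\ast(D')=\Spec(D')$. Chaining the equivalences then yields that $D$ is completely integrally closed precisely when $\Spec^\ast(D)=\Spec(D)$, precisely when $\Spec^\ast(D')=\Spec(D')$, and hence precisely when $D'$ is completely integrally closed, which is the assertion.

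Because the substantive work is already carried by Theorem \ref{teor:phiSpecast} (which in turn rests on Theorem \ref{teor:Specast} and Lemma \ref{lemma:qf-omef}), I do not expect any genuine obstacle at this final step; the only points that need a moment's care are checking the elementary characterization above and using the surjectivity of $\phi$ to secure $\phi(\Spec(D))=\Spec(D')$. An alternative route that bypasses $\Spec^\ast$ altogether would invoke the corollary to Theorem \ref{teor:Specast} directly: it phrases complete integral closedness via a condition (density of the rim closures of all closed cocompact sets) expressed purely in the Zariski topology, so Lemma \ref{lemma:qf-omef} together with the topological invariance of the notions \emph{closed} and \emph{cocompact} shows at once that the condition transfers across $\phi$.
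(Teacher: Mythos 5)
Your proof is correct and is essentially the paper's own argument: the paper likewise reduces the statement to the characterization that $D$ is completely integrally closed if and only if $\Spec(D)=\Spec^\ast(D)$, and then applies Theorem \ref{teor:phiSpecast}. The only difference is that you spell out this characterization in detail (routing the nontrivial direction through the corollary to Theorem \ref{teor:Specast}), whereas the paper asserts it as immediate.
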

\begin{proof}
The condition that $D$ is completely integrally closed is equivalent to $\Spec(D)=\Spec^\ast(D)$. The claim follows from Theorem \ref{teor:phiSpecast}.
\end{proof}

We now give a more algebraic version of the condition in Theorem \ref{teor:Specast}.

\begin{teor}\label{teor:MinIMinJ}
Let $D$ be a Pr\"ufer domain and let $P\in\Spec(D)$. The following are equivalent:
\begin{enumerate}[(i)]
\item\label{teor:MinIMinJ:Pin} $P\in\Spec^\ast(D)$;
\item\label{teor:MinIMinJ:Min} if $I,J$ are finitely generated ideals such that $(0)\neq I\subsetneq J\subseteq P$, then $\Min(I)\cap\Min(J)\neq\emptyset$.
\end{enumerate}
\end{teor}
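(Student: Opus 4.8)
The plan is to derive the equivalence from Theorem~\ref{teor:Specast}, replacing condition~\ref{teor:MinIMinJ:Pin} by the rim-closure density statement~\ref{teor:Specast:closed} and then matching the latter with~\ref{teor:MinIMinJ:Min} by contraposition in both directions. Everything rests on a local dictionary between rim elements and minimal primes: if $J$ is a finitely generated ideal, the rim elements of $\V(J)$ are exactly the immediate predecessors of the primes in $\Min(J)$. Indeed, if $Q$ is a rim element with successor $Q'\in\V(J)$, the primes contained in $Q'$ form a chain (as $D_{Q'}$ is a valuation domain), so $Q$ is the largest prime properly below $Q'$, and $J\nsubseteq Q$ forces $Q'$ to be minimal over $J$; conversely, the immediate predecessor of a minimal prime of $J$ is a rim element of $\V(J)$. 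Moreover, for $Q'\in\Min(J)$ the ideal $JD_{Q'}$ is principal, say $JD_{Q'}=b'D_{Q'}$, and a computation with the convex subgroups of the value group of $D_{Q'}$ shows that $\bigcap_n J^nD_{Q'}=\bigcap_n(b')^nD_{Q'}$ contracts to precisely this immediate predecessor $Q$. I expect this value-group computation to be the main technical point.

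For \ref{teor:MinIMinJ:Min}$\Rightarrow$\ref{teor:MinIMinJ:Pin} I argue contrapositively: from a pair $(0)\neq I\subsetneq J\subseteq P$ with $\Min(I)\cap\Min(J)=\emptyset$ I would show that the closed cocompact set $\V(J)$, which contains $P$, has $\qf(\V(J))$ \emph{not} dense, so that \ref{teor:Specast:closed} fails and $P\notin\Spec^\ast(D)$. Fix $Q'\in\Min(J)$; since $I\subseteq J\subseteq Q'$ but $Q'\notin\Min(I)$, localizing gives $ID_{Q'}=a'D_{Q'}$, $JD_{Q'}=b'D_{Q'}$ with $a'=b'c'$ and $\sqrt{a'D_{Q'}}\subsetneq Q'D_{Q'}$. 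In the value group of $D_{Q'}$, minimality of $Q'$ over $J$ means no positive element is infinitely smaller than $\beta:=v(b')$, while $\sqrt{a'D_{Q'}}\subsetneq Q'D_{Q'}$ produces a positive $\delta$ infinitely smaller than $v(a')=\beta+v(c')$; comparing suitable multiples of $\delta$, $\beta$ and $v(a')$ forces $v(c')>n\beta$ for all $n$, whence $ID_{Q'}=a'D_{Q'}\subseteq\bigcap_n(b')^nD_{Q'}=QD_{Q'}$ for the immediate predecessor $Q$ of $Q'$. Contracting, $I\subseteq Q$ for every rim element $Q$ of $\V(J)$, and since $I\subseteq J$ lies in every prime of $\V(J)$, we get $\qf(\V(J))\subseteq\V(I)$, which is proper because $I\neq(0)$. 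Hence $\qf(\V(J))$ is not dense.

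For the reverse implication, again contrapositively, suppose $P\notin\Spec^\ast(D)$. By Theorem~\ref{teor:Specast} there is a finitely generated $J\subseteq P$ with $\qf(\V(J))$ not dense, so $\qf(\V(J))\subseteq\V(cD)$ for some $c\neq 0$; in particular $c$ lies in every rim element of $\V(J)$, and $c$ is a nonunit since $P\in\V(cD)$. I would then set $I:=cJ$. This is finitely generated, nonzero, contained in $P$, and strictly contained in $J$: as $J$ is invertible and $c$ is a nonunit, $cJ=J$ would force $cD=D$. For each $Q'\in\Min(J)$ with immediate predecessor $Q$ one has $c\in Q$, hence $I=cJ\subseteq cD\subseteq Q\subsetneq Q'$; thus $Q\in\V(I)$ sits strictly below $Q'$, so $Q'\notin\Min(I)$. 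Since this holds for every minimal prime of $J$, we obtain $\Min(I)\cap\Min(J)=\emptyset$, so \ref{teor:MinIMinJ:Min} fails.

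The one point requiring care is the degenerate case in which some $Q'\in\Min(J)$ has a rank-one localization: then its immediate predecessor is $(0)$, the generic point, and $\qf(\V(J))$ is automatically dense. This is harmless in both implications, since the hypotheses already exclude it—a nonzero $I$ cannot be contained in $(0)$, and $\qf(\V(J))$ being non-dense forbids $(0)$ from occurring as a rim element—so all the rim elements in play are genuine nonzero primes. The main obstacle I anticipate is packaging the value-group estimate cleanly; once the identity $\bigcap_n J^nD_{Q'}=QD_{Q'}$ and the ``infinitely divisible'' comparison of $v(c')$ with the multiples of $\beta$ are in place, both implications follow formally from Theorem~\ref{teor:Specast}.
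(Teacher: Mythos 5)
Your proof is correct, and it pivots on the same hinge as the paper's: both reduce the statement to condition~\ref{teor:Specast:closed} of Theorem~\ref{teor:Specast} and exploit the dictionary between rim elements of $\V(J)$ and immediate predecessors of the primes in $\Min(J)$. The executions differ in both directions, though. For the passage from density to the $\Min$ condition the paper argues directly: density gives a rim element $Q\notin\V(I)$, and the tree structure of $\Spec(D)$ forces its successor $Q'$ to lie in $\Min(I)\cap\Min(J)$ --- no valuation arithmetic at all. Your contrapositive version instead proves the stronger universal statement that, when $\Min(I)\cap\Min(J)=\emptyset$, \emph{every} rim element of $\V(J)$ contains $I$, and this is where your value-group estimate carries the load (from $m\delta\geq\beta$ and $nm\delta<\beta+v(c')$ one gets $v(c')>n\beta$ for all $n$, hence $ID_{Q'}\subseteq\bigcap_n(b')^nD_{Q'}$); the estimate is right, but it is machinery the paper avoids entirely. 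In the other direction the paper, given $\qf(\V(J))\subseteq\V(I)$, uses the radical trick ($I\subseteq\rad(J)$, hence $I^n\subseteq J$) and applies the hypothesis to the pair $(I^n,J)$, reaching a contradiction via the immediate predecessor of a prime in $\Min(I^n)\cap\Min(J)$; you instead build the explicit witness pair $(cJ,J)$ --- with the neat observation that invertibility of $J$ plus $c$ a nonunit gives $cJ\subsetneq J$ --- and verify disjointness of the $\Min$ sets directly, which is arguably cleaner since it avoids passing to a power of $I$. Your handling of the degenerate case where a rim element is $(0)$ is also correct in both implications. One cosmetic slip: your two implication paragraphs are labelled backwards. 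The paragraph headed ``\ref{teor:MinIMinJ:Min}$\Rightarrow$\ref{teor:MinIMinJ:Pin}'' assumes the negation of \ref{teor:MinIMinJ:Min} and derives the negation of \ref{teor:MinIMinJ:Pin}, which is the contrapositive of \ref{teor:MinIMinJ:Pin}$\Rightarrow$\ref{teor:MinIMinJ:Min}, and symmetrically for the paragraph headed ``the reverse implication''; since the two paragraphs together establish both contrapositives, the equivalence is fully proved, but the headers should be exchanged.
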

\begin{proof}
We prove that the second condition in the statement is equivalent to condition \ref{teor:Specast:closed} of Theorem \ref{teor:Specast}.

Suppose that the latter holds, and let $I,J$ be finitely generated ideals such that $(0)\neq I\subsetneq J\subseteq P$. Then, $X=\V(J)$ is closed and cocompact, and thus the rim closure $\qf(X)$ is dense in $\Spec(D)$; in particular, $\qf(X)\nsubseteq\V(I)$. Since $\V(J)\subseteq\V(I)$, there is a rim element $Q$ of $X$ that is not in $\V(I)$; by definition, there is a $Q'\in X$ such that $Q\subseteq Q'$ and there are no prime ideals properly contained between $Q$ and $Q'$. Since $\Spec(D)$ is a tree, $Q$ is the only direct predecessor of $Q'$: hence, $Q'$ is a minimal element of $X$, and thus $Q'\in\Min(J)$. Moreover, $Q'$ must also be minimal in $\V(I)$ (otherwise $Q\in\V(I)$); hence $Q'\in\Min(I)\cap\Min(J)$, and in particular $\Min(I)\cap\Min(J)\neq\emptyset$.

Conversely, suppose that condition \ref{teor:MinIMinJ:Min} of the present theorem holds. Let $X$ be a closed cocompact set containing $P$, and let $J$ be a finitely generated ideal such that $X=\V(J)$. If $\qf(X)$ is not dense, then $\qf(X)\subseteq\V(I)$ for some finitely generated ideal $I$; thus $I\subseteq\rad(J)$ and since $I$ is finitely generated $I^n\subseteq J$ for some integer $n$. By hypothesis, $\Min(I^n)\cap\Min(J)$ contains a prime ideal $Q$; moreover, since $Q$ is minimal over a finitely generated ideal there is a prime ideal $Q_0\subsetneq Q$ such that there are no prime ideals properly between $Q_0$ and $Q$. Hence, $Q_0\in\qf(X)\subseteq\V(I)=\V(I^n)$, against the fact that $Q$ is minimal over $I^n$. Therefore $\qf(X)$ is dense and $P\in\Spec^\ast(D)$ by Theorem \ref{teor:Specast}.
\end{proof}

\begin{cor}
Let $D$ be a Pr\"ufer domain. Then, $D$ is completely integrally closed if and only if, whenever $I\subseteq J$ are nonzero proper finitely generated ideals, $\Min(I)\cap\Min(J)\neq\emptyset$.
\end{cor}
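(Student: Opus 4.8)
The plan is to read off this corollary directly from Theorem~\ref{teor:MinIMinJ}, together with the standard reformulation (already used above) that a Pr\"ufer domain $D$ is completely integrally closed if and only if $\Spec(D)=\Spec^\ast(D)$. Thus it suffices to compare the global condition on pairs $I\subseteq J$ appearing in the statement with the pointwise condition \ref{teor:MinIMinJ:Min} of Theorem~\ref{teor:MinIMinJ}, which characterizes membership of a single prime $P$ in $\Spec^\ast(D)$.

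For the direct implication I would assume $D$ completely integrally closed, so that every $P\in\Spec(D)$ lies in $\Spec^\ast(D)$, and take nonzero proper finitely generated ideals $I\subseteq J$. When $I=J$ the conclusion is immediate, since $\Min(I)\cap\Min(J)=\Min(I)$ is nonempty for any nonzero proper ideal. When $I\subsetneq J$, I would choose a maximal (hence prime) ideal $P$ containing $J$, which exists because $J$ is proper; then $(0)\neq I\subsetneq J\subseteq P$, and since $P\in\Spec^\ast(D)$, condition \ref{teor:MinIMinJ:Min} of Theorem~\ref{teor:MinIMinJ} yields $\Min(I)\cap\Min(J)\neq\emptyset$.

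For the converse I would assume the global condition and fix an arbitrary $P\in\Spec(D)$. Any finitely generated ideals with $(0)\neq I\subsetneq J\subseteq P$ are in particular nonzero proper finitely generated ideals with $I\subseteq J$ (the properness of $J$ following from $J\subseteq P\subsetneq D$), so the hypothesis gives $\Min(I)\cap\Min(J)\neq\emptyset$. Hence condition \ref{teor:MinIMinJ:Min} holds for $P$, and Theorem~\ref{teor:MinIMinJ} places $P$ in $\Spec^\ast(D)$. As $P$ was arbitrary, $\Spec(D)=\Spec^\ast(D)$, and therefore $D$ is completely integrally closed.

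I expect no substantive obstacle here: the content is entirely carried by Theorem~\ref{teor:MinIMinJ}, and the only points requiring a little care are the quantifier bookkeeping, the production of a prime above $J$ in the direct implication, and the boundary case $I=J$, none of which is more than routine.
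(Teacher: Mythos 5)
Your proof is correct and takes essentially the same route as the paper: both directions reduce to Theorem~\ref{teor:MinIMinJ} together with the equivalence between complete integral closedness and $\Spec(D)=\Spec^\ast(D)$, applied via a prime (maximal) ideal containing $J$. If anything, you are more careful than the paper's own terse proof, which skips the boundary case $I=J$ that falls outside the strict inclusion required in condition~\ref{teor:MinIMinJ:Min}.
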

\begin{proof}
If $D$ is completely integrally closed, then $\Spec(D)=\Spec^\ast(D)$; if $I\subseteq J$ are nonzero finitely generated proper ideals, then applying Theorem \ref{teor:Specast} to any prime $P$ containing $P$ shows that $\Min(I)\cap\Min(J)\neq\emptyset$. Conversely, if we always have $\Min(I)\cap\Min(J)\neq\emptyset$, then Theorem \ref{teor:Specast} implies that $\Spec(D)=\Spec^\ast(D)$ and so $D=D^\ast$.
\end{proof}

The following result extends \cite[Corollary 8]{gilmer_CIC_1966} from the case where each set $\Min(I)$ is finite to the compact case.
\begin{prop}\label{prop:MinIcomp}
Let $D$ be a Pr\"ufer domain such that, for every finitely generated ideal $I$, the set $\Min(I)$ is compact. Then, $D^\ast$ has dimension at most $1$, and in particular it is completely integrally closed.
\end{prop}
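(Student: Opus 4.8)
The plan is to reduce to the Bézout case, show that no prime of height at least $2$ lies in $\Spec^\ast(D)$, and deduce $\dim D^\ast\le 1$; the ``completely integrally closed'' conclusion then follows formally. I would first reduce to $D$ Bézout: by Proposition \ref{prop:kn} we have $\kn(D)^\ast=\kn(D^\ast)$, and $P\mapsto P\kn(D)$ is a homeomorphism $\Spec(D)\to\Spec(\kn(D))$ carrying the finitely generated ideals of $D$ to principal ideals of $\kn(D)$ with the same sets of minimal primes (as $f\kn(D)=c(f)\kn(D)$); thus the hypothesis that every $\Min(I)$ be compact is a purely topological condition preserved by passing to $\kn(D)$, and so are $\dim D^\ast$ and the property of being completely integrally closed. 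I would also record that a Prüfer domain of dimension at most $1$ is completely integrally closed, being the intersection of its localizations at maximal ideals, each a rank-$\le 1$ valuation domain (hence completely integrally closed), and an intersection of completely integrally closed overrings with common quotient field is completely integrally closed. Applied to $D^\ast$, this reduces the whole statement to the bound $\dim D^\ast\le 1$.

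For that bound, the contraction map $\Spec(D^\ast)\to\Spec(D)$ is injective and order preserving, with image inside $\Spec^\ast(D)$ (if $Q\cap D=P$ then $PD^\ast\subseteq Q\subsetneq D^\ast$); hence a chain of length $2$ in $\Spec(D^\ast)$ would produce primes $P_0\subsetneq P_1\subsetneq P$ of $D$ with $P\in\Spec^\ast(D)$ of height $\ge 2$. It therefore suffices to show that no prime of height $\ge 2$ lies in $\Spec^\ast(D)$. Fix such a $P$ and a prime $P_1$ with $(0)\subsetneq P_1\subsetneq P$. By Theorem \ref{teor:MinIMinJ} it is enough to exhibit finitely generated ideals $I\subsetneq J\subseteq P$ with $\Min(I)\cap\Min(J)=\emptyset$; equivalently, by Theorem \ref{teor:Specast}, a principal $yD$ with $y\in P$ whose rim closure $\qf(\V(yD))$ is not dense, which in a Bézout domain amounts to $\bigcap_n y^nD\ne(0)$, i.e. $y^{-1}\in D^\ast$.

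The construction, where compactness enters, would proceed as follows. Valuation-locally, for $Q\in\Min(yD)$ the ideal $\bigcap_n y^nD_Q$ is the prime of $D_Q$ attached to the smallest convex subgroup of the value group containing $v_Q(y)$; it contracts to the immediate predecessor $Q^-$ of $Q$ in the tree $\Spec(D)$, and $Q^-\ne(0)$ exactly when $Q$ has height $\ge 2$. I would first choose $y\in P$ avoiding every height-$1$ prime, so that each minimal prime of $yD$ has height $\ge 2$ and a nonzero immediate predecessor; this needs $P\not\subseteq\bigcup\{Q:\operatorname{ht}Q=1\}$, which I expect to obtain from compactness. Then $\bigcap_n y^nD=\bigcap_{Q\in\Min(yD)}Q^-$ is an a priori infinite intersection of nonzero primes, and the crucial step is to use the compactness of $\Min(yD)$ to reduce it to a finite subintersection, which is nonzero because a finite intersection of nonzero ideals of a domain contains their nonzero product. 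Choosing $0\ne c$ in this intersection, the pair $J=yD$, $I=cyD\subsetneq yD$ satisfies $c\in Q^-\subsetneq Q$ for every $Q\in\Min(yD)$, so no minimal prime of $J$ is minimal over $I$; hence $\Min(I)\cap\Min(J)=\emptyset$ and $P\notin\Spec^\ast(D)$.

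The main obstacle is precisely this compactness upgrade. Membership $c\in Q^-$ unwinds to ``$c/y^n\in D_Q$ for all $n$'', which describes a countable intersection of basic open sets (through the conductors $(D:c/y^n)$) rather than a single open set, so it cannot be fed naively into a finite-subcover argument; the real content is to show that $Q\mapsto Q^-$ behaves semicontinuously enough on the compact space $\Min(yD)$ (using the tree structure) that the pointwise nonvanishing of the $Q^-$ propagates to a nonvanishing global intersection. This is exactly the step that replaces, and strengthens, the finiteness hypothesis of \cite[Corollary 8]{gilmer_CIC_1966}, where $\Min(I)$ is finite and the product argument is immediate. Securing the preliminary avoidance of the height-$1$ primes from compactness is a second, related point that would require care.
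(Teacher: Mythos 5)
Your formal reductions are correct and essentially coincide with the paper's frame: primes of $D^\ast$ contract injectively and order\nobreakdash-preservingly into $\Spec^\ast(D)$, so it suffices to show that no prime of height at least $2$ lies in $\Spec^\ast(D)$, and the ``completely integrally closed'' claim then follows because a Pr\"ufer domain of dimension at most $1$ is completely integrally closed. (The detour through $\kn(D)$ is harmless but unnecessary here, since Theorems \ref{teor:Specast} and \ref{teor:MinIMinJ} are already stated for Pr\"ufer domains.) Your local analysis is also sound: for $y\in P$ one indeed has $\bigcap_n y^nD=\bigcap_{Q\in\Min(yD)}Q^-$, where $Q^-$ denotes the immediate predecessor of $Q$. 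But the proof is genuinely incomplete at exactly the point you flag yourself: you never produce the witness. The two steps you leave open --- choosing $y\in P$ outside every height-$1$ prime, and upgrading pointwise nonvanishing of the $Q^-$ to nonvanishing of their intersection --- are the entire mathematical content of the proposition; everything around them is formal. This is precisely the step the paper does not reprove either: it invokes \cite[Lemma 9.4]{InvXD} to produce, from a finitely generated $I\subseteq P$ over which $P$ is not minimal, a finitely generated $J$ with $I\subsetneq J\subseteq P$ and $\Min(I)\cap\Min(J)=\emptyset$, and then concludes via the criterion of Theorem \ref{teor:MinIMinJ}.

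Moreover, your plan applies compactness in the wrong place, which is why you run into the semicontinuity obstruction. The correct use of compactness is one step earlier, in the Zariski topology on $\Min(I)$, to build the ideal $J$: take a nonzero $x\in P_1$, where $(0)\subsetneq P_1\subsetneq P$, and set $I=xD$. No $Q\in\Min(I)$ can contain $P$ (else $xD\subseteq P\subseteq Q$ and minimality would force $P=Q\in\Min(I)$, contradicting $xD\subseteq P_1\subsetneq P$); hence for each $Q\in\Min(I)$ there is $a_Q\in P\setminus Q$, and the open sets $\D(a_QD)$ cover $\Min(I)$. Compactness yields $a_1,\dots,a_k\in P$ with $\Min(I)\subseteq\D(a_1D)\cup\dots\cup\D(a_kD)$; put $J:=xD+(a_1,\dots,a_k)D\subseteq P$. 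Then $\V(J)\cap\Min(I)=\emptyset$, so every minimal prime of $J$ \emph{properly} contains a minimal prime of $I$, whence $\Min(I)\cap\Min(J)=\emptyset$ and $P\notin\Spec^\ast(D)$ by Theorem \ref{teor:MinIMinJ}. Note that this single construction fills both of your gaps at once: in the B\'ezout case write $J=yD$; every $Q'\in\Min(yD)$ properly contains some $Q\in\Min(xD)$, so $Q'$ has height at least $2$ (your avoidance step), and $x\in Q\subseteq Q'^-$, so $x\in\bigcap_n y^nD\neq(0)$, i.e., $y^{-1}\in D^\ast$ with $y\in P$ (your nonvanishing step) --- no semicontinuity of $Q\mapsto Q^-$ is ever needed. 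As written, however, your proposal stops short of this, and so it does not constitute a proof.
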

\begin{proof}
Let $P$ be a prime ideal of height at least $2$: then, there is a nonzero finitely generated ideal $I$ contained in $P$ such that $P$ is not minimal over $I$. By \cite[Lemma 9.4]{InvXD}, we can find a finitely generated ideal $J\supseteq I$ such that $\Min(I)\cap\Min(J)\neq\emptyset$. By Theorem \ref{teor:Specast}, $P\notin\Spec^\ast(D)$; hence, $\Spec^\ast(D)$ can contain only $(0)$ and prime ideals of height $1$. Thus $D^\ast$ has dimension at most $1$. The last claim follows from the fact that every Pr\"ufer domain of dimension at most $1$ is completely integrally closed.
\end{proof}

\begin{cor}
Let $D$ be a completely integrally closed Pr\"ufer domain with $\dim(D)>1$. Then, there is a finitely generated ideal $I$ such that $\Min(I)$ is not compact.
\end{cor}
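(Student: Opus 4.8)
The plan is to prove this as a direct contrapositive of Proposition \ref{prop:MinIcomp}, exploiting the hypothesis that $D$ is completely integrally closed. I would argue by contradiction: suppose that, for every finitely generated ideal $I$, the set $\Min(I)$ is compact. Then Proposition \ref{prop:MinIcomp} applies verbatim and yields that $D^\ast$ has dimension at most $1$.

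The crucial (and essentially the only) extra observation is that the hypothesis ``$D$ is completely integrally closed'' means precisely $D=D^\ast$. Hence $\dim(D)=\dim(D^\ast)\leq 1$. This contradicts the standing assumption $\dim(D)>1$, so the supposition that every $\Min(I)$ is compact must fail; that is, there exists a finitely generated ideal $I$ with $\Min(I)$ not compact, which is exactly the claim.

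I do not expect any real obstacle here, since all the substantive work was already carried out in the proof of Proposition \ref{prop:MinIcomp} (via Theorem \ref{teor:Specast} and the cited result \cite[Lemma 9.4]{InvXD}). The corollary simply repackages that proposition as a statement about completely integrally closed domains, using the tautology $D=D^\ast$ to transfer the dimension bound from $D^\ast$ back to $D$. The only point that warrants a moment's care is confirming that $\Min(I)$ may be taken over \emph{nonzero} finitely generated ideals $I$, so that the dimension obstruction in Proposition \ref{prop:MinIcomp} is genuinely triggered whenever $\dim(D)>1$; but this is immediate, as any prime of height at least $2$ contains such an ideal.
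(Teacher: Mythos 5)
Your proof is correct and is exactly the argument the paper intends: the paper's proof is literally ``Use Proposition \ref{prop:MinIcomp}'', i.e., the contrapositive you spell out, with $D=D^\ast$ transferring the dimension bound back to $D$. No gaps; your version just makes the one-line contraposition explicit.
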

\begin{proof}
Use Proposition \ref{prop:MinIcomp}.
\end{proof}

The previous corollary applies, for example, to the ring $\Int(A)$ of integer-valued polynomials over a Dedekind domain $A$ with finite residue fields, as well as to the ring of entire functions (see, respectively, \cite[Theorem VI.1.7]{intD} and \cite{henriksen_prime}).

\bibliographystyle{plain}
\bibliography{/bib/articoli,/bib/libri,/bib/miei}
\end{document}